\documentclass[12pt]{amsart}
\makeindex
\usepackage[latin1]{inputenc} 
\usepackage{lmodern}
\usepackage{epsfig,graphicx,color,amsmath,a4wide,amssymb,amsfonts,amsbsy,xy,latexsym,epsf,pstricks,verbatim,times,hyperref}
\usepackage{hyperref}
\usepackage{enumerate}
\usepackage{pgf,color}
\usepackage{pgfarrows}

\xyoption{all}

\xyoption{all}
\usepackage[english]{babel}

\newcounter{noalgo}[section]
\setcounter{noalgo}{0}\newdimen\indentalgo
\newdimen\indentalgodec\indentalgo=0.0mm\indentalgodec=10mm

\newcommand{\If}{\advance\indentalgo by \indentalgodec {\bf if }}

\newcommand{\For}{\global\advance\indentalgo by \indentalgodec {\bf for }}

\newcommand{\Endindent}{\global\advance\indentalgo by -\indentalgodec}

\newdimen\decalage \decalage=0.5cm
\newcounter{algo} \setcounter{algo}{0}

\def\<<{\leavevmode
  \raise0.28ex\hbox{$\scriptscriptstyle\langle\!\langle$}\nobreak
  \hskip -.6pt plus.3pt minus.2pt\,}
\def\>>{\,\nobreak\hskip -.6pt plus.3pt minus.2pt
  \raise0.28ex\hbox{$\scriptscriptstyle\rangle\!\rangle$}}


\def\<<{\leavevmode
  \raise0.28ex\hbox{$\scriptscriptstyle\langle\!\langle$}\nobreak
  \hskip -.6pt plus.3pt minus.2pt\,}
\def\>>{\,\nobreak\hskip -.6pt plus.3pt minus.2pt
  \raise0.28ex\hbox{$\scriptscriptstyle\rangle\!\rangle$}}



\newtheorem{remark}{Remark}
\newtheorem{theorem}{Theorem}

\newtheorem{lemma}{Lemma}

\def\Gal{{\mathop{\rm Gal}\nolimits}}

\providecommand{\myproofname}{Proof}

\begin{document}

\title{Minimal Hopf-Galois Structures on Separable Field Extensions}

\author{Tony Ezome}
\address{Universit{\'e} des Sciences et Techniques de Masuku,
Facult{\'e} des Sciences, D{\'e}partement de math{\'e}matiques et informatique,
BP 943 Franceville, Gabon.}
\email{tony.ezome@gmail.com}

\author{Cornelius Greither}
\address{Cornelius Greither, Institut f{\"u}r Theoretische Informatik, 
Mathematik und Operations Research, Fakult{\"a}t f{\"u}r 
Informatik, Universit{\"a}t der Bundeswehr M{\"u}nchen,
Werner-Heisenberg-Weg 39 
85579 Neubiberg }
\email{cornelius.greither@unibw.de}


\date{\today}

\maketitle

\bibliographystyle{plain}

\begin{abstract}
In Hopf-Galois theory, 
every $H$-Hopf-Galois structure on a field extension $K/k$
gives rise to an injective
map $\mathcal{F}$ from the set of $k$-sub-Hopf algebras of $H$
into the intermediate fields of $K/k$.
Recent papers on the failure of the surjectivity of $\mathcal{F}$
reveal that there exist many Hopf-Galois structures for which there
are many more subfields than sub-Hopf algebras.
This paper surveys and illustrates group-theoretical methods
to determine $H$-Hopf-Galois structures on finite separable extensions
in the extreme situation when $H$ has only
two sub-Hopf algebras.
\end{abstract}


\section{Introduction}

Let $k$ be field.
A Hopf algebra $H$ over $k$ is defined to be a $k$-bialgebra endowed with a $k$-linear
map $ S: H \longrightarrow H$ called the antipode so that denoting by
$\nabla$ the multiplication, $\Delta$ the comultiplication, 
$\eta$ the unit and $\epsilon$ the counit, we have
$$\nabla \circ (\mathrm{id_H} \otimes
S) \circ \Delta= \eta \circ \epsilon
= \nabla \circ (S  \otimes \mathrm{id_H}) \circ \Delta.$$
Let $\sigma : H \otimes H \longrightarrow H\otimes H$ be the $k$-linear map 
defined by $\sigma(x \otimes y) = y\otimes x$ for all $x,y \in H$.
Then, $H$ is said to be cocommutative if $\sigma \circ \Delta=\Delta$.
Group algebras over $k$ are basic examples of cocommutative $k$-Hopf algebras.
Indeed if $G$ is a group, then the group algebra $k[G]$ is
a cocommutative $k$-Hopf algebra with comultiplication
given by $\Delta(g) = g \otimes g$, counit given by $\epsilon(g) = 1$
and antipode given by $S(g) = g^{-1}$, for all $g \in  G$.
Given a Galois extension of fields $K/k$, the Fundamental Theorem of Galois Theory (FTGT)
states that there is a one-to-one correspondence between 
the lattice of intermediate fields $k\subseteq F \subseteq K$
and the lattice of subgroups of $G=\Gal(K/k)$.
This is the Galois correspondence. It allows us to determine
intermediate subfields of $K$ from subgroups of $G$.
So if $G$ is a group with prime order,
then the only subfields are $K$ and $k$.
Hopf-Galois theory is a generalization of Galois theory.
Indeed if $K/k$ is Galois
with Galois group $G$, then $G$ operates linearly on $K$
as automorphism group, and this action
extends to a $k$-algebra homomorphism 
$\mu : k[G] \longrightarrow \mathrm{End}_k(K)$ so that:
$$K/k \text{ is Galois } \Longleftrightarrow
(1,\mu) : K \otimes_k k[G] \longrightarrow \mathrm{End}_k(K) \text{ is
an isomorphism,}$$
where  $(1,\mu)$ is given by
$$ (1,\mu)(s\otimes h)(t)=s.(\mu(h)(t)), \ 
 \text{ for all } s,t \in K, h \in k[G].$$
From this we say that a finite extension of fields $K/k$
is \textit{Hopf-Galois} (we also say that $K/k$ has a 
\textit{Hopf-Galois structure}) if there exists
a finite cocommutative $k$-Hopf algebra
$H$ and a Hopf action $\mu  : H \longrightarrow \mathrm{End}_k(K)$
such that
$$(1, \mu ) : K \otimes_k H \longrightarrow \mathrm{End}_k(K) \text{ is
an isomorphism.}$$

Chase and Sweedler obtained a weak Galois correspondence 
for Hopf-Galois extensions.

\begin{theorem}[\cite{ChaseSweedler}]\label{thm:1}
 Let $K/k$ be a finite Hopf-Galois extension with algebra $H$
and Hopf action $\mu  : H \longrightarrow \mathrm{End}_k(K)$.
For a $k$-sub-Hopf algebra $H'$ of $H$ we define
$$K^{H'} = \{x \in K \ \vert \ \mu(h)(x) = \epsilon(h) \cdot x \
\text{ for all } h \in H' \},$$
where $\epsilon$ is the counit of $H$.
Then, $K^{H'}$ is a subfield of $K$, containing $k$, and the map
$$ \begin{array}{llll}
\mathcal{F} : & \{ H'\subset H \text{ sub-Hopf algebra} \} & \longrightarrow & 
\{ \text{Fields } E \ \vert \ k\subseteq E \subseteq K \}\\
 & \qquad \qquad \qquad \qquad \qquad H'  & \longmapsto &  \ K^{H'} 
\end{array}$$
is injective and inclusion reversing.
\end{theorem}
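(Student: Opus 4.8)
The plan is to settle the two formal assertions — that $K^{H'}$ is a field and that $\mathcal F$ reverses inclusions — by a direct computation with the module-algebra axioms, and then to obtain injectivity of $\mathcal F$ by recovering $H'$ from $E:=K^{H'}$ as the set of those $h\in H$ for which $\mu(h)$ is $E$-linear.

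I would use that the Hopf action makes $K$ into an $H$-module algebra, i.e. $\mu(1_H)=\mathrm{id}_K$, $\mu(h)(1_K)=\epsilon(h)1_K$ and $\mu(h)(xy)=\sum\mu(h_{(1)})(x)\,\mu(h_{(2)})(y)$, where $\Delta(h)=\sum h_{(1)}\otimes h_{(2)}$. For $c\in k$ this gives $\mu(h)(c)=c\,\mu(h)(1_K)=\epsilon(h)\,c$, hence $k\subseteq K^{H'}$, and $K^{H'}$ is a $k$-subspace since its defining condition is $k$-linear in $x$. If $x,y\in K^{H'}$ and $h\in H'$, then $\Delta(h)\in H'\otimes H'$ because $H'$ is a \emph{sub-coalgebra}, so $\mu(h)(xy)=\sum\epsilon(h_{(1)})\epsilon(h_{(2)})\,xy=\epsilon(h)\,xy$ by the counit axiom; therefore $K^{H'}$ is a $k$-subalgebra of $K$, and being a finite-dimensional integral domain over the field $k$ it is itself a field. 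Inclusion reversal is immediate: $H_1'\subseteq H_2'$ means the defining conditions of $K^{H_2'}$ include those of $K^{H_1'}$, so $K^{H_2'}\subseteq K^{H_1'}$.

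For injectivity, fix a sub-Hopf algebra $H'$, put $E=K^{H'}$, and run the same computation with $e\in E$ as one of the two factors: it yields $\mu(h)(ex)=e\,\mu(h)(x)$ for $h\in H'$, i.e. $\mu(h)\in\End_E(K)$. Thus $(1,\mu)$ restricts to an injective $k$-linear map $\Phi\colon K\otimes_k H'\hookrightarrow\End_E(K)$, whose image $M$ is a $k$-subalgebra of $\End_k(K)$: it is closed under composition since $\mu$ is multiplicative and $H'$ is a \emph{subalgebra} as well as a sub-coalgebra, and it contains every multiplication operator $t\mapsto at$ $(a\in K)$ because $1_H\in H'$ and $\mu(1_H)=\mathrm{id}$. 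Now $K$ is a faithful simple left $M$-module — faithful because $M\subseteq\End_k(K)$, simple because $M$ contains multiplication by all of $K$ — and its endomorphism ring $\End_M(K)$ is exactly $E$, for an $M$-linear endomorphism of $K$ is in particular $K$-linear, hence multiplication by some $a\in K$, and commuting with each $\mu(h)$ $(h\in H')$ forces $\mu(h)(a)=\epsilon(h)a$, i.e. $a\in K^{H'}=E$. Since $[K:E]<\infty$, the Jacobson density theorem (equivalently Artin--Wedderburn) gives $M=\End_E(K)$, so $\Phi$ is an isomorphism $K\otimes_k H'\xrightarrow{\ \sim\ }\End_{K^{H'}}(K)$.

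To finish, note that $(1,\mu)$ is injective on all of $K\otimes_k H$ and $H'$ is a $k$-linear direct summand of $H$; hence the preimage of $\End_{K^{H'}}(K)$ under $(1,\mu)$ is exactly $K\otimes_k H'$, and therefore
$$H'=\{\,h\in H\ :\ \mu(h)\ \text{is}\ K^{H'}\text{-linear}\,\}.$$
The right-hand side depends only on the subfield $K^{H'}$, so $K^{H_1'}=K^{H_2'}$ implies $H_1'=H_2'$, which is the asserted injectivity. I expect the only real obstacle to be the equality $M=\End_{K^{H'}}(K)$ from the preceding paragraph — equivalently, that restricting the Hopf-Galois isomorphism to a sub-Hopf algebra still surjects onto the \emph{full} ring of $K^{H'}$-linear endomorphisms, or again that $\dim_k H'=[K:K^{H'}]$; everything else is formal or a routine Hopf-algebra manipulation, and it is exactly here that one must exploit that $(1,\mu)$, not merely $\mu$, is an isomorphism.
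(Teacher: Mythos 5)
Your argument is correct, and there is nothing in the paper to compare it against: Theorem 1 is quoted from Chase--Sweedler with no proof given, so yours is a self-contained reconstruction rather than a variant of an internal argument. What you wrote is essentially the standard (Morita-theoretic) proof. The formal half is fine: since $H'$ is both a subalgebra and a subcoalgebra, the module-algebra identity $\mu(h)(xy)=\sum\mu(h_{(1)})(x)\,\mu(h_{(2)})(y)$ keeps everything inside $H'$, giving that $K^{H'}$ is a $k$-subalgebra of $K$ and hence a field, and inclusion reversal is immediate. The substantive half is your isomorphism $K\otimes_k H'\cong \mathrm{End}_{K^{H'}}(K)$: the image $M$ of $K\otimes_k H'$ under $(1,\mu)$ is a subring of $\mathrm{End}_k(K)$ (a smash product) containing all multiplications by $K$, so $K$ is a faithful simple $M$-module with $\mathrm{End}_M(K)=K^{H'}$, and Jacobson density plus finite-dimensionality gives $M=\mathrm{End}_{K^{H'}}(K)$; injectivity of $(1,\mu)$ then identifies the preimage of $\mathrm{End}_{K^{H'}}(K)$ with $K\otimes_k H'$ and recovers $H'$ as $\{h\in H:\mu(h)\ \text{is}\ K^{H'}\text{-linear}\}$. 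You correctly isolate the one genuinely nontrivial input, namely that $(1,\mu)$ --- not merely $\mu$ --- is an isomorphism, which is what forces $\dim_k H'=[K:K^{H'}]$. The only caveat worth recording is that the module-algebra axioms you invoke are not spelled out in the paper's definition of a Hopf action; they are part of the standard definition of a Hopf--Galois structure and the theorem fails without them, so your reading is the intended one.
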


Recent papers on the failure of the surjectivity of $\mathcal{F}$ 
reveal that pretty often there are many more subfields than 
sub-Hopf algebras, see for instance \cite{ChildsGreither}, \cite{Childs2},
or \cite{CrespoRioVela2}.
We say that {\it the Galois correspondence holds in its strong form}
for a Hopf-Galois structure $H$ on a field extension $K/k$,
if the map $\mathcal{F}$ associated to $H$ in Theorem $\ref{thm:1}$ is a bijection.
It is known that the $k$-sub-Hopf algebras of a finite group algebra
$k[G]$ are the group algebras
$k[G']$ where $G'$ is a subgroup of $G$, see for instance
 [\cite{CrespoRioVela2}, Proposition 2.1]. Therefore,
FTGT implies that any finite Galois extension $K/k$ with Galois group $G$
has a natural Hopf-Galois structure (defined by the group algebra $k[G]$) whose
Galois correspondence holds in its strong form.
In case $G$ has prime order, $k[G]$ has only two $k$-sub-Hopf algebras.
Motivated by this, we define a {\it minimal Hopf-Galois structure} on a field extension $K/k$
to be a structure given by a $k$-Hopf algebra $H$ having exactly two $k$-sub-Hopf algebras;
we exclude the trivial case $\dim_k(H) = 1$.

This paper surveys and illustrates group-theoretical methods
to determine minimal Hopf-Galois structures on separable field extensions. 
In section $2$ we state a fundamental criterion 
characterizing these minimal
structures. Then we deduce, later in section $4$, minimal Hopf-Galois
structures on the so-called {\it almost classically Galois extensions} introduced by
Greither and Pareigis in \cite{GreitherPareigis}. Sections $3$ and $5$ are devoted to illustrations.
We start with basic examples constructed from simple groups, and
counterexamples constructed from groups having a nontrivial proper characteristic subgroup.
In particular, we present a family of
radical extensions in characteristic zero having no Hopf-Galois structure.
By using characteristically simple groups, we prove that for any positive integer
$n \le 9$, except for $ n = 6$, there exists a number field $K$ of degree $n$
whose Galois closure $\tilde{K}$ satisfies $n < [\tilde{K} : \mathbb{Q}] < 672$
and such that $K/\mathbb{Q}$ has only one minimal Hopf-Galois structure.
All these examples are separable field extensions having either no minimal Hopf-Galois structure,
or exactly one minimal structure,
or at least two minimal structures.
We deduce interesting questions for future work.

\section{Fundamental criterion}\label{section:Basic}

As previously mentioned, the Galois correspondence associated to
an Hopf-Galois structure  is not surjective in general.
Another difference between Galois theory and Hopf-Galois
theory is that
one may have several Hopf-Galois structures on the same Galois extension
while a Galois extension has only one Galois group.
Hopf-Galois theory was first introduced by Chase and Sweedler
\cite{ChaseSweedler} in 1969 to study purely inseparable extensions.
Then Greither and Pareigis \cite{GreitherPareigis} developed  in 1987 Hopf-Galois
theory for separable extensions. Since the publication of \cite{GreitherPareigis},
many works concerning Hopf-Galois theory have been published.
These works deal with interesting problems such as designing methods
to determine the number of distinct Hopf-Galois structures on
a given Galois extension, finding ways of quantifying Hopf-Galois structures
for which Galois correspondence holds in its strong form, or finding ways
of quantifying the failure of the surjectivity of the Galois correspondence.\\

In this section we are interested in identifying
minimal Hopf-Galois structures among the structures 
that can be achieved on a given
separable field extension. The starting point is the characterization
of Hopf-Galois structures proposed by Greither and Pareigis.

\begin{theorem}[\cite{GreitherPareigis}, Theorem 2.1]\label{thm:2}
Let $K/k$ be a degree $n$ separable extension
and $\tilde{K}$ its normal closure. Set $G = \mathrm{Gal}(\tilde{K}/k)$
and $G' = \mathrm{Gal}(\tilde{K}/K)$. Then 
$K$ has a $k$-Hopf-Galois structure if, and only if, there exists a 
regular subgroup $N$ of $\mathrm{Perm}(G/G')$ normalized by 
$G$, where $G$ is identified as a subgroup of $\mathrm{Perm}(G/G')$
via the faithful action
$$ \xymatrix{
 \lambda : G  \ar@{->}[r] & 
 \mathrm{Perm}(G/G')\\
    g  \ar@{|-{>}}[r] & (\lambda_g : xG'\mapsto gxG').
}$$
Furthermore, the Hopf-Galois structure corresponding to
a regular subgroup $N$ of $\mathrm{Perm}(G/G')$ normalized
by $G$ is defined by 
$$ \tilde{K}[N]^{G}
=\{ x\in \tilde{K}[N] \ \vert \ \sigma(x)=x, \forall \sigma \in G \}$$
where for $x=\sum_{\tau \in N} a_\tau \tau \in \tilde{K}[N]$
and $\sigma \in G$, we have $\sigma(x)= \sum_{\tau \in N} 
\sigma(a_\tau) \lambda(\sigma) \tau \lambda(\sigma)^{-1}$.
\end{theorem}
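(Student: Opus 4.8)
\begin{myproof}[Proof strategy]
The plan is to establish both implications at once by \emph{Galois descent} along the Galois extension $\tilde K/k$, reducing the classification of Hopf-Galois structures on $K/k$ to a combinatorial question about the split \'etale $\tilde K$-algebra $\tilde K\otimes_k K$. Write $X=G/G'$; note $|X|=[G:G']=n$, and that $\lambda$ is faithful because its kernel is the normal core of $G'$ in $G$, which is trivial precisely because $\tilde K$ is the normal closure of $K$.

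First I would invoke faithfully flat descent for $\tilde K/k$, with Galois group $G$: it identifies the datum of a finite $k$-Hopf algebra $H$ with a Hopf action $\mu\colon H\to\mathrm{End}_k(K)$ with the datum of a finite $\tilde K$-Hopf algebra $\tilde H$ carrying a semilinear $G$-action together with a $G$-equivariant Hopf action on $\tilde K\otimes_k K$; under this dictionary $H=\tilde H^{\,G}$ and $K=(\tilde K\otimes_k K)^{G}$. Because the map $(1,\mu)$ becomes an isomorphism after the faithfully flat base change $-\otimes_k\tilde K$ exactly when the corresponding map for $\tilde H$ is, the Hopf-Galois property transfers in both directions. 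So it is enough to classify the $G$-stable Hopf-Galois structures on $\tilde K\otimes_k K$ over $\tilde K$.

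Next I would split the base-changed algebra: evaluation at the $k$-embeddings of $K$ into $\tilde K$ yields a $\tilde K$-algebra isomorphism
\[
\tilde K\otimes_k K\;\xrightarrow{\ \sim\ }\;\mathrm{Map}\bigl(\Hom_k(K,\tilde K),\,\tilde K\bigr)=\mathrm{Map}(X,\tilde K),
\]
where $\Hom_k(K,\tilde K)\cong X$ is the standard $G$-equivariant identification for the left-translation action $\lambda$. Transporting the $G$-action from the first tensor factor, $\sigma\in G$ acts on $\mathrm{Map}(X,\tilde K)$ by combining the Galois action on values with the permutation $\lambda(\sigma)$ of the index set $X$. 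The heart of the proof is then the claim that the Hopf-Galois structures on the split algebra $\mathrm{Map}(X,\tilde K)/\tilde K$ are exactly the group algebras $\tilde K[N]$ attached to the regular subgroups $N$ of $\mathrm{Perm}(X)$, where $\eta\in N$ acts on $\mathrm{Map}(X,\tilde K)=\prod_{x\in X}\tilde K e_x$ by $\eta\cdot e_x=e_{\eta(x)}$. One inclusion is a direct verification; for the converse, if $\mathrm{Map}(X,\tilde K)/\tilde K$ is $H$-Hopf-Galois then $\dim_{\tilde K}H=n$, the dual $H^{*}$ is a commutative Hopf algebra, and $\Spec\,\mathrm{Map}(X,\tilde K)$ is a torsor under the finite group scheme $\Spec(H^{*})$; each idempotent $e_x$ furnishes a $\tilde K$-point, so this torsor is trivial, $\Spec(H^{*})$ is the constant group scheme on some group $N$ with $|N|=n$, $X$ becomes a principal homogeneous $N$-set, and $H=\tilde K[N]$ with $N$ realized as a regular subgroup of $\mathrm{Perm}(X)$.

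Finally I would impose $G$-stability. For the conjugation action of $G$ on $\mathrm{Perm}(X)$ through $\lambda$, the structure $\tilde K[N]$ admits a compatible semilinear $G$-action, hence descends to $k$, if and only if the subgroup $N\subseteq\mathrm{Perm}(X)$ is carried into itself, that is, if and only if $\lambda(G)$ normalizes $N$; and in that case the $G$-action must be the one in the statement, $\sigma\cdot\sum_{\tau\in N}a_\tau\tau=\sum_{\tau\in N}\sigma(a_\tau)\,\lambda(\sigma)\tau\lambda(\sigma)^{-1}$. The descended Hopf algebra is $H=\tilde K[N]^{G}$, acting on $K=\mathrm{Map}(X,\tilde K)^{G}$ by restriction of the $N$-action on $\mathrm{Map}(X,\tilde K)$, and $(1,\mu)$ is an isomorphism by the descent step. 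The converse construction from a given $N$ runs the same steps backwards. I expect the main obstacle to be the classification claim in the previous paragraph --- that \emph{every} Hopf-Galois structure on the split algebra comes from a regular permutation group: the torsor-triviality argument disposes of it cleanly once one is willing to pass through the Cartier dual, while a self-contained proof must instead examine directly how a rank-$n$ cocommutative $\tilde K$-Hopf algebra can act on $\prod_{x}\tilde K e_x$ compatibly with the idempotent decomposition. A secondary point requiring care is to make the simultaneous descent of the Hopf-algebra and module-algebra structures in the first step precise, and to check that it reproduces exactly the $G$-action written in the statement.
\end{myproof}
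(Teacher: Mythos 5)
This theorem is quoted from [\cite{GreitherPareigis}] and the paper supplies no proof of it, so there is nothing internal to compare against; your sketch is essentially the original Greither--Pareigis argument (Galois descent along $\tilde K/k$, the splitting $\tilde K\otimes_k K\cong\mathrm{Map}(G/G',\tilde K)$ via the $k$-embeddings of $K$, classification of Hopf-Galois structures on the split algebra by regular subgroups of $\mathrm{Perm}(G/G')$ using triviality of the torsor under $\Spec(H^{*})$, and $G$-stability yielding the normalization condition and the twisted action written in the statement). The strategy is correct, and you rightly identify the classification step on the split algebra as the one genuinely delicate point, handled exactly as in the original reference by passing to the Cartier dual and using the rational points $e_x$ to trivialize the torsor.
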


With the notation of Theorem \ref{thm:2},
a Hopf-Galois structure on a separable field extension $K/k$ defined
by the algebra $\tilde{K}[N]^{G}$ is said to be of type $N$.
Actually, $N$ and $G/G'$ have necessarily the same order,
  but there is no natural one-one correspondence between them;
	in fact $G/G'$ is not even a group in general.
We recall that a subgroup $N$ of
$\mathrm{Perm}(G/G')$ is said to be regular if the action of $N$ on $G/G'$
is transitive and the stabilizer of any point is trivial.
By [\cite{GreitherPareigis}, Theorem 4.1], we know that if such an $N$
is also normalized by $G$ and contained in $G$ then
it is a normal complement of $G'$ in $G$. 
Theorem \ref{thm:2} says that regular subgroups
of $\mathrm{Perm}(G/G')$ normalized by $G$
are in one-to-one correspondence with the
Hopf-Galois structures on $K/k$.
The following theorem specifies the minimal Hopf-Galois structures inside this correspondence.

\begin{theorem}[Fundamental criterion]\label{thm:3}
Let $K/k$ be a finite separable extension. 
Let $\tilde{K}$ be the normal closure of $K/k$.
Set $G = \mathrm{Gal}(\tilde{K}/k)$ and $G' = \mathrm{Gal}(\tilde{K}/K)$.
Then the minimal Hopf-Galois structures on
$K/k$ are defined by the algebras $\tilde{K}[N]^{G}$ for which $N$
is a regular subgroup of $\mathrm{Perm}(G/G')$
normalized by $G$ such that $N$ has no proper nontrivial
subgroup normalized by $G$. 
In particular:
\begin{enumerate}[1.]
\item The number of minimal Hopf-Galois structures on $K/k$ is greater 
than or equal to the number of normal complements $N$ of $G'$
in $G$ such that $N$ admits no proper nontrivial subgroup $U$ which
is a normal subgroup of $G$.
\item Assume that $K/k$ has a Hopf-Galois structure defined by $\tilde{K}[N]^{G}$.
   \begin{enumerate}
   \item If $N$ has a nontrivial proper characteristic subgroup
(this is the case when $K/k$ is a Hopf-Galois extension of degree $mp$
where $p$ is a prime number and $p > m>1$), then
this structure is not minimal.
   \item If $N$ has prime order, then
the structure is minimal.
   \end{enumerate}
\item If $K/k$ is a Galois extension whose Galois group is a simple
group, then $K/k$ has only one minimal Hopf-Galois structure.
\end{enumerate}
\end{theorem}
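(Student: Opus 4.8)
The plan is to first establish the displayed characterization that precedes the enumeration, from which the three numbered items follow by group-theoretic bookkeeping. The key point is to describe the $k$-sub-Hopf algebras of an algebra $H=\tilde{K}[N]^{G}$ for $N$ a regular subgroup of $\mathrm{Perm}(G/G')$ normalized by $G$, and I would do this by Galois descent along $\tilde{K}/k$. The twisted $G$-action on $\tilde{K}[N]$ given in Theorem \ref{thm:2} is by semilinear Hopf-algebra automorphisms which permute the $\tilde{K}$-basis $N$ via $\tau\mapsto\lambda(\sigma)\tau\lambda(\sigma)^{-1}$, and $H$ is its algebra of invariants, so $\tilde{K}\otimes_{k}H\cong\tilde{K}[N]$ $G$-equivariantly. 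Since $\tilde{K}$ is a field, the $\tilde{K}$-sub-Hopf algebras of the group algebra $\tilde{K}[N]$ are exactly the $\tilde{K}[N']$ for $N'\leq N$ (the description of sub-Hopf algebras of a group algebra over a field recalled in the introduction), and such a one is $G$-stable precisely when the subset $N'$ of the basis is permuted by $G$, i.e. when $N'$ is normalized by $G$. Galois descent along $\tilde{K}/k$ then gives an inclusion-preserving bijection $H'\mapsto\tilde{K}\otimes_{k}H'$ from $k$-sub-Hopf algebras of $H$ onto $G$-stable $\tilde{K}$-sub-Hopf algebras of $\tilde{K}[N]$, with inverse ``take $G$-invariants''. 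Hence the $k$-sub-Hopf algebras of $H$ correspond, inclusion-preservingly, to the subgroups of $N$ normalized by $G$. Since $\{1\}$ and $N$ are always such (yielding $k$ and $H$, distinct iff $\dim_{k}H=|N|>1$), $H$ gives a minimal structure iff $N\neq\{1\}$ and $N$ has no proper nontrivial subgroup normalized by $G$; together with Theorem \ref{thm:2} this is the assertion preceding the list.

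Next I would read off the numbered items. For (1): if $N$ is a normal complement of $G'$ in $G$, it is a regular subgroup of $\mathrm{Perm}(G/G')$ normalized by $G$, and for $U\subseteq N\subseteq G$ one checks $\lambda(\sigma)U\lambda(\sigma)^{-1}=\lambda(\sigma U\sigma^{-1})$, so ``$U$ normalized by $G$ inside $\mathrm{Perm}(G/G')$'' is the same as ``$U\triangleleft G$''; thus such an $N$ gives a minimal structure exactly when it has no proper nontrivial normal-in-$G$ subgroup, and distinct normal complements give distinct structures by Theorem \ref{thm:2}, whence the inequality. For (2)(a): a characteristic subgroup of $N$ is fixed by every automorphism of $N$, in particular by the $\lambda$-conjugation action of $G$ (which acts on $N$ by automorphisms), hence is normalized by $G$, so a proper nontrivial characteristic subgroup forces a third $k$-sub-Hopf algebra; in the parenthetical case $|N|=mp$ with $p>m>1$ prime, the number $n_{p}$ of Sylow $p$-subgroups satisfies $n_{p}\equiv1\pmod p$ and $n_{p}\mid m<p$, so $n_{p}=1$ and the unique (hence characteristic) Sylow $p$-subgroup is proper and nontrivial. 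For (2)(b): a group of prime order has no proper nontrivial subgroup, so the only subgroups normalized by $G$ are $\{1\}$ and $N$, and $|N|>1$.

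For (3), now $\tilde{K}=K$ and $G'=\{1\}$, so the types $N$ are the regular subgroups of $\mathrm{Perm}(G)$ normalized by $\lambda(G)$. Existence of a minimal structure: $N=\lambda(G)$ always qualifies, and by (1) (the only normal complement being $G$ itself) it is minimal precisely because $G$ is simple. For uniqueness, given a minimal $N$ I would examine the conjugation map $c:\lambda(G)\to\mathrm{Aut}(N)$; simplicity forces $\ker c\in\{\lambda(G),\{1\}\}$. If $\ker c=\lambda(G)$, then $N$ lies in, hence equals (equal orders), the centralizer $\rho(G)$ of $\lambda(G)$ in $\mathrm{Perm}(G)$, where every subgroup of $N\cong G$ is normalized by $G$, so minimality forces $G$ to have no proper nontrivial subgroup, i.e. $G\cong\mathbb{Z}/p$, and then $\lambda(G)=\rho(G)$ and there is only one structure. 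If $\ker c=\{1\}$, then $N\cap\lambda(G)$ is normalized by $\lambda(G)$, hence $\{1\}$ or $\lambda(G)$; the latter gives $N=\lambda(G)$ by order count. The remaining case $N\cap\lambda(G)=\{1\}$ is what must be excluded: here $N$ is characteristically simple (characteristic subgroups are normalized by $G$) and $|N|=|G|$ is not a prime power, so $N\cong T^{m}$ with $T$ nonabelian simple; a counting argument should force $m=1$, and then $c$ identifies $\lambda(G)$ with $\mathrm{Inn}(N)$, so $\langle\lambda(G),N\rangle=N\times C$ with $C$ the centralizer of $N$ in $\mathrm{Perm}(G)$ (again regular) and $\lambda(G)$ a diagonal subgroup of $N\times C$, whence regularity of $\lambda(G)$ would produce a fixed-point-free automorphism of the nonabelian simple group $N$ — impossible. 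The hard part is precisely this last step; alternatively one may invoke the known classification that for $G$ nonabelian simple the only regular subgroups of $\mathrm{Perm}(G)$ normalized by $\lambda(G)$ are $\lambda(G)$ and $\rho(G)$, after which (3) is immediate once one observes that $\rho(G)$ gives the classical, non-minimal structure.
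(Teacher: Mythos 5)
Your proposal is correct in substance but reaches the conclusions by a partly different route. For the main characterization, the paper simply cites [\cite{CrespoRioVela2}, Proposition 2.2] for the fact that the $k$-sub-Hopf algebras of $\tilde{K}[N]^{G}$ correspond to the subgroups of $N$ normalized by $G$; you instead prove this by Galois descent along $\tilde{K}/k$, using the description of sub-Hopf algebras of a group algebra over a field and the observation that $\tilde{K}[N']$ is stable under the twisted $G$-action exactly when $N'$ is normalized by $G$. That is a legitimate self-contained replacement for the citation. Items (1), (2a), (2b) are handled exactly as in the paper (the paper cites [\cite{GreitherPareigis}, Proposition 4.1] for (1), which you re-verify directly, including the point that for $U\subseteq N\subseteq G$ normalization by $\lambda(G)$ is the same as normality in $G$). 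For item (3) your treatment of the abelian case is actually more elementary than the paper's: the paper invokes Byott's uniqueness theorem [\cite{Byott3}], whereas your centralizer/kernel analysis disposes of $G\cong \mathbb{Z}/p$ directly.

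The one place you should be careful is the nonabelian simple case of item (3). The paper settles it by citing [\cite{Byott1}, Theorem 1.1] (exactly two Hopf--Galois structures, namely $N=\lambda(G)$ and $N=\rho(G)$) together with [\cite{GreitherPareigis}, Theorem 5.3] to see which one is minimal. Your direct attempt to exclude the case $N\cap\lambda(G)=\{1\}$, $N\neq\rho(G)$ is not a proof: the step ``a counting argument should force $m=1$'' is unsubstantiated, and the final contradiction rests on the nonexistence of fixed-point-free automorphisms of nonabelian simple groups, which is itself a deep fact; what you are sketching is essentially a re-proof of Byott's theorem, whose known proof uses the Schreier conjecture (hence the classification of finite simple groups). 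Since you explicitly offer the citation of that classification as the fallback --- which is precisely what the paper does --- the proof as a whole stands, but the direct sketch should be dropped or clearly labelled as heuristic rather than presented as an alternative argument.
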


\begin{proof}
Since the Hopf algebras providing a Hopf
Galois structure on $K/k$ are of the form $\tilde{K}[N]^G$,
the assertion results from [\cite{CrespoRioVela2}, Proposition 2.2]. 
\begin{enumerate}[1.]
\item This special case is an immediate consequence of [\cite{GreitherPareigis}, Proposition 4.1].

\item 
   \begin{enumerate}
   \item Let $N$ be a group of order $mp$ where $p$ is a prime number and $p > m>1$.
Then  the unique $p$-Sylow of $N$ is a nontrivial proper characteristic subgroup.
So a Hopf-Galois extension of degree $mp$
with $p$ a prime number such that $p > m>1$ is a 
special case of the situation that we are interested in.
Assume now that $N$ is a regular subgroup of $\mathrm{Perm}(G/G')$
normalized by $G$. So $\lambda(x) N \lambda(x)^{-1} \subset N$ for all $x \in G$,
where $\lambda$ is the faithful action
described in Theorem \ref{thm:2}. Assume also that $N$ possesses at least one nontrivial proper characteristic subgroup $U$.
Since $G$ normalizes $N$, the maps $n \mapsto \lambda(x) n \lambda(x)^{-1}$
are automorphisms of $N$. We deduce that $G$ also normalizes $U$. 
Hence $k$, $\tilde{K}[U]^G$ and $\tilde{K}[N]^G$ are distinct $k$-sub-Hopf algebras
of  $\tilde{K}[N]^G$ by [\cite{CrespoRioVela2}, Proposition 2.2].

   \item In that case, $N$ has no nontrivial proper subgroup.
Therefore the only $k$-sub-Hopf algebras of $\tilde{K}[N]^{G}$ are
$k$ and $\tilde{K}[N]^{G}$ itself.
   \end{enumerate}

\item Assume first that the Galois group $G$ of $K/k$ is an abelian simple group. This
means that $G$ is a cyclic group with prime order.
From the above item, we deduce that the classical Hopf-Galois structure
defined by the group algebra $k[G]$ is a minimal one.
This is the only Hopf-Galois structure on $K/k$
by [\cite{Byott3}, Theorem 1].
On the other hand, assume that $K/k$ is a Galois extension 
whose Galois group $G$ is a nonabelian simple group.
By [\cite{Byott1}, Theorem 1.1],
there are exactly two Hopf-Galois structures on $K/k$.
By [\cite{GreitherPareigis}, Theorem 5.3], we know that one of these
structures comes from a Hopf algebra $H$
giving rise to a bijective Galois correspondence between
its $k$-sub-Hopf algebras and intermediate subfields
$k\subseteq F \subseteq K$ which are normal over $k$.
However, $G$ is a simple group,
therefore the only subfields which are normal over $k$ are $k$ itself and $K$.
Thus, this Hopf-Galois structure is minimal. The other Hopf-Galois
structure is the classical one, and it is obviously not minimal,
since $G$ (nonabelian simple) does have nontrivial subgroups.
That is, for nonabelian simple $G$ as well, we have only one minimal
Hopf-Galois structure.
\end{enumerate}
\end{proof}

\begin{remark}
Concerning item $3$ of Theorem \ref{thm:3},
we would like to point out that one has precise information
about the only two Hopf-Galois structures [\cite{Byott1}, Theorem 1.1] defined
on a Galois extension $K/k$ with nonabelian simple Galois group $G$.
Indeed, one of them, the classical one,
  is given by the group algebra $H=k[G]$. The other one
	(the first one to be considered in the last paragraph) arises
	by taking $N=\lambda(G)$. Hence the action of $G$ on $N$ amounts
	to the conjugation action of $G$ on itself. The $k$-Hopf algebra
	$H'$ which results may be
	constructed 
	for any $G$, and as soon as $G$ is not abelian, $H'$ is not isomorphic
	to $H$ as a $k$-Hopf algebra.
\end{remark}

\section{Examples (part 1)}

This section illustrates some of the minimal Hopf-Galois structures described in Theorem 
\ref{thm:3}. 

\subsection{Example 1}\label{exple1}
Let $K/k$ be a separable extension of degree $n\le 4$ whose normal closure $\tilde{K}/k$
has Galois group $G$. Set $G' = \mathrm{Gal}(\tilde{K}/K)$.
Assume that $G'$ has a normal complement in $G$ and $\mathrm{Perm}(G/G')$
is isomorphic to $G$. Then $K/k$ has only one minimal Hopf-Galois structure. Indeed:
\begin{enumerate}[1.]
\item Assume $n=2$. Since any separable extension of degree $2$ is Galois,
our assertion comes from the third item of Theorem \ref{thm:3}.
\item Assume $n=3$. Then $G$
is isomorphic to the symmetric group $S_3$.
The algebra $H=\tilde{K}[C_3]^{S_3}$ defines 
the only minimal Hopf-Galois structure on $K/k$.
\item Assume $n=4$. Then $G$ is isomorphic to $S_4$.
Since:
   \begin{enumerate}
   \item The Klein four-group $C_2 \times C_2$ is 
the unique normal subgroup of $S_4$ of order $4$,
   \item $C_2$ is the unique proper nontrivial subgroup of the Klein four-group,
   \item $C_2$ is not a normal subgroup of $S_4$,
   \end{enumerate}
we conclude that $\tilde{K}[C_2 \times C_2]^{S_4}$ defines 
the only minimal Hopf-Galois structure on $K/k$.
\end{enumerate}

\begin{remark}
If $K$ is a number field of degree $5$ such that
its normal closure $\tilde{K}/\mathbb{Q}$ has Galois group
$S_5$, then $K/\mathbb{Q}$ has no Hopf 
Galois structure because $S_5$ admits no normal
subgroup of order $5$. There is another argument to see this
in a more general way, see for instance
[\cite{GreitherPareigis}, Proof of Counterexample 2.4].
\end{remark}

\subsection{Example 2}\label{exple2}
Hopf-Galois extensions without minimal structure.
\begin{enumerate}[1.]
\item Given an odd prime number $p$, it is easily seen that:
\begin{enumerate}
\item No dihedral extension of degree $2p$
can have a minimal Hopf-Galois structure.
\item No Galois extension whose Galois group is equal to the holomorph of
the cyclic group $C_p$ can have a minimal Hopf-Galois structure.
\end{enumerate}

\item \begin{enumerate}
\item Let $p$ be an odd prime and 
$n$ a positive integer. Let $k$ be a field of characteristic zero. 
Assume $K=k(w)$ with $w^{p^n}=a \in k$ where $a$ is such that 
$[K:k ]=p^n$ and let $r$ denote the largest integer between $0$ and $n$
such that $K\cap k(\zeta_{p^r})=k(\zeta_{p^r})$, where $\zeta_{p^r}$ denotes a primitive
$p^r$-th root of unity. It is shown in \cite{Kohl1} that if $r <n$
then there are $p^r$ Hopf-Galois structures on 
$K/k$ of type $N$, a cyclic group of order $p^n$. 
So if $n\ge 2$, then none of these $p^r$ Hopf-Galois structures is a minimal one,
since $N$ does have characteristic subgroups.

\item Assume that $K/k$ is a cyclic extension of degree $2^n$ for $n \ge 3$.
It is shown in \cite{Byott4} that $K/k$ admits $3\cdot 2^{n-2}$ Hopf
Galois structures. Among them $2^{n-2}$ of cyclic type, $2^{n-2}$ 
of dihedral type and $2^{n-2}$ of generalized quaternion type.
In fact, any of these structures is associated to a subgroup $N$ of $\mathrm{Perm}(G/G')$
which has at least one nontrivial proper characteristic subgroup. Indeed:
\begin{itemize}
\item If $N$ is a cyclic group, then any subgroup of $N$ is a characteristic subgroup.
In the case when $N$ has order $2^n$ for $n \ge 3$, there are at least $2$ 
nontrivial proper characteristic subgroups.
\item If $N=D_{2^n}$ is a dihedral group, then its unique normal subgroup of order $2^{n-1}$
is a nontrivial proper characteristic subgroup. 
\item If $N=Q_n$ is a generalized quaternion group, then its unique normal subgroup of order $2$
is a nontrivial proper characteristic subgroup.
\end{itemize}
\end{enumerate}
\end{enumerate}

\section{Minimal Hopf-Galois structures on almost classically Galois extensions}\label{section:almost classically}

As before, we consider a separable field
extension $K/k$ of degree $n$, and we denote by $\tilde{K}/k$ its normal closure. Set
$G$ the Galois group of $\tilde{K}/k$ and $G'$ the Galois group of $\tilde{K}/K$.
We previously determined in Theorem \ref{thm:3} a lower bound of the 
number of minimal Hopf-Galois structures on $K/k$ from 
normal complements of $G'$ in $G$.
By [\cite{GreitherPareigis}, Definition 4.2], the existence
of a normal complement $N$ of $G'$ in $G$ means
that $K/k$ is an almost classically Galois extension.
This is equivalent to saying that $G$ is 
equal to the semidirect product $G=N\rtimes_{\varphi} G'$.
Note that any almost classically Galois extension has a Hopf-Galois structure.
That is why
these extensions are sometimes called almost classically Hopf-Galois extensions.
If the normal complement $N$ of $G'$ in $G$ is a cyclic group,
one says that $K/k$ is an \textit{almost cyclic extension}, see \cite{Byott4}.
Note that any Galois extension $K/k$ is obviously
almost classically Galois with $N=\mathrm{Gal}(K/k)$ and $G'=\{1\}$.
In this section we are interested in minimal Hopf-Galois structures on 
almost classically Galois extensions $K/k$ in the case when the Galois group
$\mathrm{Gal}(\tilde{K}/k)$ is equal to the holomorph of one of its normal subgroup.

 The inverse Galois problem in
Galois theory is concerned with the question of 
determining whether, given a finite group $G$ and a field $k$,
there exists a Galois extension $M/k$ such
that the Galois group $\mathrm{Gal}(M/k)$ is isomorphic to $G$.
If that is the case, one says that $G$ is realizable over $k$.
The classical conjecture of the inverse Galois problem says that every finite group 
is realizable over the rational numbers. 
This conjecture has been formulated in the early 19th century.
It is still not proven, but partial results have been obtained. For instance, 
Igor Shafarevich showed that every finite solvable group is realizable over $\mathbb{Q}$.
On the other hand, it is known that every finite group
is realizable over $\overline{\mathbb{Q}}(t)$ and more generally over function fields in one 
variable over any algebraically closed field of characteristic zero.
So any semidirect product $N\rtimes_{\varphi} G'$ is realizable at least over
$k=\overline{\mathbb{Q}}(t)$.

\begin{lemma}\label{lemma1}
With the above notation, assume that $K/k$ is an
almost classically Galois extension
such that $G$ is the holomorph of a characteristically simple group
$N$. Then $\tilde{K}[N]^{G}$ defines a minimal Hopf-Galois structure on $K/k$.
\end{lemma}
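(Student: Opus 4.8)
\emph{Plan of proof.} Since $K/k$ is almost classically Galois, $N$ is by definition a normal complement of $G'$ in $G$, so $G=N\rtimes_{\varphi}G'$. I would first record that $\tilde{K}[N]^{G}$ is indeed a Hopf-Galois structure on $K/k$: identifying $G$ inside $\mathrm{Perm}(G/G')$ via the faithful action $\lambda$ of Theorem \ref{thm:2}, the image $\lambda(N)$ is a regular subgroup of $\mathrm{Perm}(G/G')$ normalized by $G$ --- regularity because $NG'=G$ and $N\cap xG'x^{-1}=1$ for all $x\in G$ (using that $N$ is normal in $G$), and normality because $\lambda(g)\lambda(N)\lambda(g)^{-1}=\lambda(gNg^{-1})=\lambda(N)$ --- so by Theorem \ref{thm:2} it corresponds to the structure $\tilde{K}[N]^{G}$ (this is the ``any almost classically Galois extension has a Hopf-Galois structure'' remark; see [\cite{GreitherPareigis}, \S 4]). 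Then, by the Fundamental criterion (Theorem \ref{thm:3}), it remains to prove that $N$ has no proper nontrivial subgroup normalized by $G$; since $\lambda$ restricts to an isomorphism $N\cong\lambda(N)$ intertwining the conjugation action of $G$ with that of $\lambda(G)$, this amounts to showing that the only subgroups $U\le N$ satisfying $gUg^{-1}=U$ for all $g\in G$ are $U=1$ and $U=N$.

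Next I would invoke the hypothesis $G=\mathrm{Hol}(N)=N\rtimes\mathrm{Aut}(N)$, with $G'$ identified with the complement $\mathrm{Aut}(N)$. The point --- which is precisely the defining feature of the holomorph --- is that for $\psi\in G'=\mathrm{Aut}(N)$ and $n\in N$ one has $\psi n\psi^{-1}=\psi(n)$ inside $G$: conjugation by the subgroup $G'$ realizes \emph{every} automorphism of $N$. Hence a subgroup $U\le N$ that is normalized by $G$ is in particular stable under all of $\mathrm{Aut}(N)$, i.e.\ $U$ is a characteristic subgroup of $N$; and since $N$ is characteristically simple, $U=1$ or $U=N$. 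Feeding this back into Theorem \ref{thm:3} yields that $\tilde{K}[N]^{G}$ is a minimal Hopf-Galois structure on $K/k$, which is the assertion of Lemma \ref{lemma1}.

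I do not anticipate a genuine obstacle; the argument is short once the group theory is set up correctly. The only delicate point is the bookkeeping: one must apply the hypothesis ``$G$ is the holomorph of $N$'' with $N$ taken to be the normal complement of $G'$, so that the point stabilizer $G'$ is literally the copy of $\mathrm{Aut}(N)$ sitting inside $\mathrm{Hol}(N)$, and one must check carefully that ``normalized by $G$'' for a subgroup $U$ contained in the normal subgroup $N$ forces ``characteristic in $N$'' through the restricted conjugation action of $G'$ on $N$. With the actions correctly matched, characteristic simplicity of $N$ closes the proof at once.
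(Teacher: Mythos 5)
Your argument is correct and follows the same route as the paper's proof: reduce via the fundamental criterion (resting on [\cite{CrespoRioVela2}, Proposition 2.2]) to showing that $N$ has no proper nontrivial subgroup normalized by $G$, observe that conjugation by $G'=\mathrm{Aut}(N)$ inside $\mathrm{Hol}(N)$ realizes every automorphism of $N$, and conclude by characteristic simplicity. Your write-up is in fact a bit more careful than the paper's (which states the $G'$-invariance reduction somewhat loosely), but the underlying idea is identical.
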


\begin{proof}
By [\cite{CrespoRioVela2}, Proposition 2.2], the Hopf-Galois structure on  $K/k$
defined by $\tilde{K}[N]^{G}$ is minimal if $N$ has no
proper nontrivial subgroup $U$ which is a normal subgroup of 
$G=N\rtimes_{\varphi} G'$.
This means that there is no $U$ such that
$$[i_a \circ \varphi(b) ](U)=U, \ \text{ for all }  a \in N, \ b \in G'$$
where $i_a$ stands for the inner automorphism of $N$ associated to $a$.
This is equivalent to saying that $\tilde{K}[N]^{G}$ defines a minimal structure
if there is no $U$ invariant under $G'$.
In particular, if $N$ is a characteristically simple group and $G$ is the holomorph of $N$,
then $\tilde{K}[N]^{G}$ defines a minimal Hopf-Galois structure on $K/k$.
\end{proof}

Simple groups obviously form a proper subfamily of characteristically simple groups.
On the other hand, Galois extensions are almost classically Galois.
We thus obtain more minimal Hopf-Galois structures from
the study made in this section than the one made in Section
\ref{section:Basic}. Note that any characteristically simple group is the direct sum
of finitely many copies of some simple group (see for instance [\cite{Byott1}, Lemma 3.2],
[\cite{Robinson}, 3.3.15] or [\cite{J.S.Rose}, Theorem 8.10]). 
The Klein four-group is the smallest abelian characteristically simple group which is not simple.
Besides, the direct product $A_5 \times A_5$ is the smallest non-abelian
characteristically simple group which is not simple. We already described
 in Example \ref{exple1} minimal
Hopf-Galois structures by using the Klein four-group and subgroups
of symmetric groups.
Lemma \ref{lemma1}  allows us to construct even more examples.

\section{Examples (part 2)}

This section illustrates minimal Hopf-Galois structures described in Lemma
\ref{lemma1} and Theorem \ref{thm:3}. 

       \subsection{Example 3}\label{exple:3}
We are interested in almost classically
Hopf-Galois extensions $K/k$ of degree $n$ such that
$n \le 9$, or $n=2^r$ and $r\ge 2$.
\begin{enumerate}[1.]
\item Burnside's theorem in Group Theory states that if
$G$ is a finite group of order $p^\alpha q^\beta$
where $p$ and $q$ are prime numbers, and $\alpha$ and $\beta$
are non-negative integers, then $G$ is solvable.
On the other hand, Shafarevich showed that that every 
finite solvable group is realizable over $\mathbb{Q}$.
Even if for $n \ge 5$
the symmetric group $S_n$ and the alternating group $A_n$ are not solvable,
Hilbert proved that for any positive integer $n$,
the symmetric group $S_n$ and the alternating group $A_n$ are realizable over
$\mathbb{Q}$. In addition, Sonn showed in \cite{Sonn} that every finite group of order
less than $672$ is realizable over $\mathbb{Q}$.
Hence for any positive integer $ n\le 9$, except for $n=6$, there exists
a number field $K$ of degree $n$ having only one minimal Hopf-Galois structure
and whose normal closure satisfies $n<[\tilde{K} : \mathbb{Q}]< 672$. Indeed:
   \begin{enumerate}
   \item Assume that $n$ is a prime number $\le 9$. Then the dihedral group $D_n$ is realizable
over $\mathbb{Q}$. Let $K/\mathbb{Q}$ be a dihedral extension
with Galois group $D_n$. Then the fixed field of $C_2$ 
is a number field of degree $n$ having only one minimal Hopf-Galois structure.

   \item In case $n \in \{4,8,9 \}$, the assertion comes from
example \ref{exple:4} below.

   \item The exceptional case when $n=6$ is a special case of Theorem \ref{thm:3}.
   \end{enumerate}

\item It is shown [\cite{Byott4}, Corollary 5.7] that any
Hopf-Galois structure on an almost cyclic extension 
of degree $2^r$ with $r\ge 2$ is
of cyclic type. Hence, almost
cyclic extensions of degree $2^r$ with $r\ge 2$ have 
no minimal Hopf-Galois structure.
\end{enumerate}

\subsection{Example 4}\label{exple:4}
On number fields of degree $4,8,$ or $9$.
\begin{enumerate}[1.]
\item Set $N=\mathbb{Z}/2\mathbb{Z} \times  \mathbb{Z}/2\mathbb{Z}$.
The automorphism group of $N$ is $\mathrm{Aut}(N)=\mathrm{GL}_{2}(\mathbf{F}_{\!2})$,
a non-abelian group of order $6$. Let $G'$ be the subgroup of
$\mathrm{Aut}(N)$ generated by  $\left( \begin{array}{ll}
1&1\\
1 & 0 \end{array} \right )$.
It is easily seen that the semidirect product $N\rtimes G'$ is isomorphic to
the alternating group $A_4$.
Denote by $\tilde{K}$ a Galois extension of $\mathbb{Q}$ with Galois group
$N\rtimes G'$, and let $K$ be the fixed field of $G'$.
We know that
$\tilde{K}[N]^{N\rtimes G'}$ defines a minimal Hopf-Galois structure on $K/\mathbb{Q}$.
We also know that any number field of degree $4$
whose normal closure has Galois group equal to $\mathrm{Hol}(N)$ has 
a minimal Hopf-Galois structure. Actually, this is the only one. Indeed,
it is known that the only groups with order $4$, up to isomorphism,
are the Klein four-group and the cyclic group $C_4$.
Since $\mathrm{Hol}(N)$ has order $24$ and $\mathrm{Hol}(C_4)$
has order $8$, our assertion follows from [\cite{CrespoRioVela1}, Theorem 1.5].

\item Set $N=\mathbb{Z}/2\mathbb{Z}
 \times \mathbb{Z}/2\mathbb{Z}  \times \mathbb{Z}/2\mathbb{Z}$ and denote by
 $G'$ the subgroup of
$\mathrm{Aut}(N)=\mathrm{GL}_{3}(\mathbf{F}_{\!2})$
 generated by $ \left( \begin{array}{lll}
1&1&1\\
1 & 1 &0 \\
1&0&0 \end{array} \right )$.
It is easily checked that $G'$ is cyclic
	of order 7, so the semidirect product $G:=N\rtimes G'$ has
	 order 56.
Let $\tilde{K}$ be a Galois extension of $\mathbb{Q}$ with Galois group
$G$, and $K$ the fixed field of $G'$. Then 
$\tilde{K}[N]^{G}$ defines a minimal Hopf-Galois structure on  $K/\mathbb{Q}$.
In addition, we know that any number field of degree $8$
whose normal closure has Galois group equal to $\mathrm{Hol}(N)$ has 
a minimal Hopf-Galois structure. By [\cite{CrespoRioVela1}, Theorem 1.5],
this is the only one
because $\mathrm{Hol}(N)$ is the largest among
all holomorphs of groups with order $8$.

\item  Set $N=\mathbb{Z}/3\mathbb{Z}
 \times  \mathbb{Z}/3\mathbb{Z}$ and denote by $G'$ the subgroup of
$\mathrm{Aut}(N)=\mathrm{GL}_{2}(\mathbf{F}_{\!3})$
 generated by $\left( \begin{array}{ll}
0&1\\
-1 & 0 \end{array} \right )$. 
Note that $G'$ is cyclic of order 4, so the
 semidirect product $G:=N\rtimes G'$ has order 36.
Denote by $\tilde{K}$ a Galois extension of $\mathbb{Q}$ with Galois group
$G$, and let $K$ be the fixed field of $G'$. 
Then
$\tilde{K}[N]^{G}$ defines a minimal Hopf-Galois structure on  $K/\mathbb{Q}$.
By using the same argument as in the first item of the present example,
we see that any number field of degree $9$
whose normal closure has Galois group equal to $\mathrm{Hol}(N)$ has 
only one minimal Hopf-Galois structure.
\end{enumerate}

\subsection{Example 5}\label{exple:5}
Minimal structures from nonabelian characteristically simple groups.\\
We saw that the Galois group of the normal closure of an almost classically
Galois extension is a semidirect product. So 
the study of minimal almost classically Hop-Galois structures
yields the study of normal subgroups of semidirect products.
In \cite{Usenko} Usenko described subgroups of semidirect products.
In particular, he characterized
semidirect products $G = N \rtimes_{\varphi} \! G'$ whose normal subgroups are exhausted by
normal subgroups lying in the centralizer of $N$ in $G$.
We describe here minimal Hopf-Galois structures from normal subgroups
of special semidirect products.
Let $K/k$ be an almost classically Galois extension. Assume that
its Galois closure $\tilde{K}/k$ has Galois group $G=\mathrm{Hol}(N)$ the holomorph of
a nonabelian characteristically simple group $N$.
Then $G$ possesses at least two distinct
normal subgroups which are isomorphic to $N$.
Indeed, it is obviously seen that
$$\Gamma_1:=\{(g; 1)\ \vert \ g\in N\}$$
 is a normal subgroup of $G$ isomorphic to $N$.
On the other hand, denoting by $\mathrm{Inn}(N)$ the group of inner
automorphisms of $N$, we know that 
$$\Gamma_2:=
\{(g^{-1}; \sigma_g )\ \vert \ \sigma_g\in \mathrm{Inn}(N)\}$$
 is a normal subgroup
of $G$ because 
$\theta \circ \sigma_g \circ \theta^{-1}=
\sigma_{\theta(g)}$,
and 
$$\begin{array}{lll}
(x,\theta) \star (g^{-1},\sigma_g) \star (\theta^{-1}(x^{-1}), \theta^{-1}) &
= & \big(x\cdot \theta(g^{-1}),\theta \circ \sigma_g\big) \star \big(\theta^{-1}(x^{-1}), \theta^{-1}\big)\\
 & = & \Bigg(x\cdot \theta\Big(g^{-1}\cdot \sigma_g\big(\theta^{-1}(x^{-1})\big)\Big),
 \theta \circ \sigma_g \circ \theta^{-1}\Bigg)\\
  & = & \big(\theta(g^{-1}), \sigma_{\theta(g)}\big)
\end{array}$$
for all $x,g \in N, \theta \in \mathrm{Aut}(N)$. Note that dot symbol stands for
the group law in $N$, and star symbol stands for the group law in $G$.
It is obvious that $\Gamma_2$ is isomorphic to $N$
and distinct from $\Gamma_1$.

\section{Conclusion and Perspectives}

This work presents Hopf-Galois structures defined 
by cocommutative Hopf algebras $H$ on separable extensions
in the extreme situation when $H$ has only two sub-Hopf algebras.
We first characterized these minimal structures
in Theorem \ref{thm:3}.
Then we specify in Lemma \ref{lemma1} the special case of almost classically Galois
extensions whose normal closures have a Galois group $G$ which is
equal to the holomorph of a characteristically simple group.
We described many illustrations of these two statements. We actually gave examples constructed
from characteristically simple groups, and counterexamples constructed from groups
having a nontrivial proper characteristic subgroup.
The resulting separable field extensions have either no minimal Hopf-Galois structure,
or exactly one minimal structure, or at least two minimal structures.
An interesting problem might be to determine an upper bound of the
number of minimal Hopf-Galois structures on a degree $n$ extension $K/k$ (separable or not)
according to $n$, in the case when the Galois group
of the normal closure $\tilde{K}/k$
is equal to the holomorph of a characteristically simple group $N$.
From classification of characteristically simple groups, one might also
start by computing upper bounds of minimal Hopf-Galois structures of
families of almost classically Galois extensions.
Then we will be able to determine the maximal number of minimal Hopf-Galois structures
which can be defined on a given almost classically Galois extension $K/k$
such that $\mathrm{Gal}(\tilde{K}/k)=\mathrm{Hol}(N)$, according to the size of $N$.

\subsection*{Acknowledgments}
The work reported in this paper is supported by Simons Foundation
via PREMA project, and the International Centre for Theoretical Physics
(ICTP) via their Associate Scheme. The authors would like to
thank the anonymous referee.

 \vspace{.8cm}

\end{document}